\newtheorem{thm}{Theorem}
\newtheorem{ex}{Example}
\newtheorem{ass}{Assumption}
\title{Deterministic Team Problems with Signaling Incentive}
\author{Ather Gattami\thanks{
Ather Gattami is with the Automatic Control Laboratory, Electrical Engineering School,
KTH-Royal Institute of Technology, 100 44, Stockholm, Sweden. E-mail: gattami@kth.se}}
\begin{document}
\maketitle
\begin{abstract}
This paper considers linear quadratic team decision problems 
where the players in the team affect each other's information structure 
through their decisions. Whereas the stochastic version 
of the problem is well known to be complex with nonlinear optimal solutions 
that are hard to find, the deterministic counterpart is shown to be tractable. 
We show that under a mild assumption, where the weighting matrix on the controller
is chosen large enough, linear decisions are optimal and can be found efficiently by solving a semi-definite program.
\end{abstract}

\begin{keywords}
Team Decision Theory, Game Theory, Convex Optimization.
\end{keywords}

\section*{Notation}
\begin{tabular}{ll}
$\mathbb{S}^n$ & The set of $n\times n$ symmetric matrices.\\
$\mathbb{S}^n_{+}$& The set of $n\times n$ symmetric positive\\
& semidefinite matrices.\\
$\mathbb{S}^n_{++}$& The set of $n\times n$ symmetric positive\\
& definite matrices.\\
$\mathcal{C}$& The set of functions
$\mu:\mathbb{R}^p\rightarrow \mathbb{R}^m$ with\\
& $\mu(y)=(\mu_1(y_1), \mu_2(y_2), ..., \mu_N(y_N))$,\\
& $\mu_i:\mathbb{R}^{p_i}\rightarrow \mathbb{R}^{m_i}$,
$\sum_{i} m_i=m$, $\sum_{i} p_i=p$.\\
$\mathbb{K}$ & $\{K\in \mathbb{R}^{m\times p}| K=\oplus \sum K_i, K_i\in \mathbb{R}^{m_i\times p_i}\}$\\
$A^{\dagger}$  & Denotes the pseudo-inverse of the\\ & square matrix $A$. \\
$A_{\perp}$      & Denotes the matrix with minimal number\\ 
			& of columns spanning the nullspace of $A$.\\
$A_i$& The $i$th block row of the matrix $A$.\\
$A_{ij}$& The block element of $A$ in position
$(i, j)$.\\
$\succeq$ & $A\succeq B$ $\Longleftrightarrow$ $A-B\in \mathbb{S}^n_{+}$.\\
$\succ$ & $A\succ B$ $\Longleftrightarrow$ $A-B\in \mathbb{S}^n_{++}$.\\
$\mathbf{Tr}$& $\mathbf{Tr}[A]$ is the trace of the matrix $A$.\\
$\mathcal{N}(m,X)$&  The set of Gaussian variables with\\
& mean $m$ and covariance $X$.
\end{tabular}
\newpage
\section{Introduction}

The team problem is an optimization problem, where
a number of decision makers (or players) make up a team, 
optimizing a common cost function with respect to some uncertainty
representing \textit{nature}. Each member of the team has 
limited information about the global state of nature. Furthermore,
the team members could have different pieces of information, which
makes the problem different from the one considered in classical 
optimization, where there is only one decision function that has
access to the entire information available about the state of nature.

Team problems seemed to possess certain properties that were considerably 
different from standard optimization, even for specific problem structures such as 
the optimization of a quadratic cost in the state of nature and
the decisions of the team members. In stochastic linear quadratic
decision theory, it was believed for a while that
certainty-equvalence holds between estimation and optimal decision with complete 
information, even for team problems. The certainty-equivalence principle can be
briefly explained as follows. First assume that every team member has
access to the information about the entire state of nature, and 
find the corresponding optimal decision for each member. 
Then, each member makes an estimate of 
the state of nature, which is in turn combined with the optimal decision
obtained from the full information assumption. It turns out that
this strategy does \textit{not} yield an optimal solution (see \cite{radner}).

A general solution to static stochastic quadratic team problems 
was presented by Radner \cite{radner}. Radner's result
gave hope that some related problems of dynamic nature could be solved 
using similar arguments. But in 1968, Witsenhausen \cite{witsenhausen:1968} 
showed in his well known paper that finding the optimal decision can be complex 
if the decision makers affect each other's information. Witsenhausen
considered a dynamic decision problem over two time steps to 
illustrate that difficulty. The dynamic problem can actually be
written as a static team problem:
\begin{equation*}
\begin{aligned}
\text{minimize }   & \mathbf{E}\hspace{1mm}\left\{k_0u_0^2+(x+u_0-u_1)^2\right\}\\ 
\text{subject to } & u_0=\mu_0(x),\hspace{2mm}
u_1=\mu_1(x+u_0+w),
\end{aligned}    
\end{equation*}
where $x$ and $w$ are Gaussian with zero mean and variance $X$ and $W$, respectively. 
Here, we have two decision makers, one corresponding to $u_0$, and the other to $u_1$.
Witsenhausen showed that the optimal decisions $\mu_0$ and $\mu_1$ 
are not linear because of the \textit{signaling}/\textit{coding incentive} of $u_0$. Decision maker $u_1$ 
measures $x+u_0+w$, and hence, its measurement is affected by $u_0$. 
Decision maker $u_0$ tries to \textit{encode} 
information about $x$ in its decision,
which makes the optimal strategy complex. 

The problem above is actually an information theoretic problem. 
To see this, consider the slightly modified problem 
\begin{equation*}
\begin{aligned}
\text{minimize }   & \mathbf{E}\hspace{1mm}(x-u_1)^2\\ 
\text{subject to } & u_0=\mu_0(x), \hspace{2mm} \mathbf{E}\hspace{1mm}u_0^2 \leq 1,  \hspace{2mm}
u_1=\mu_1(u_0+w) 
\end{aligned}    
\end{equation*}
The modification made is that we removed $u_0$ from the objective function, and instead added a constraint 
$ \mathbf{E}\hspace{1mm}u_0^2 \leq 1$ to make sure that it has a limited variance (of course we could set an 
arbitrary power limitation on the variance). The modified problem is exactly the Gaussian channel coding/decoding 
problem (see Figure \ref{teamchannel})! The optimal solution to Witsenhausens counterexample is still unknown. 
Even if we would restrict the optimization problem to the set of linear decisions, there is still no known polynomial-time 
algorithm to find optimal solutions. Another interesting counterexample was recently given in \cite{lipsa:2008}.

\begin{figure}
\begin{center}
\psfig{file=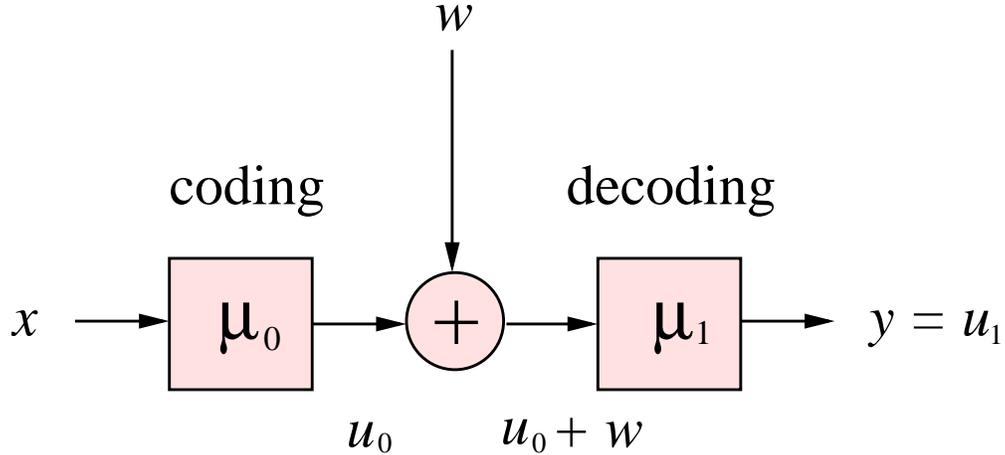,width=0.8\hsize}
\end{center}
\caption{Coding-decoding diagram over a Gaussian channel.}
\label{teamchannel}
\end{figure}

In this paper, we consider the problem of distributed decision
making with information constraints under linear quadratic
settings. For instance, information constraints appear naturally
when making decisions over networks. These problems can be
formulated as team problems. Early results considered static team
theory in stochastic settings \cite{marschak:1955}, \cite{radner},
\cite{ho:chu}. In \cite{didinsky:basar:1992}, the team problem
with two team members was solved. The solution cannot be easily
extended to more than two players since it uses the fact that the
two members have common information; a property that doesn't
necessarily hold for more than two players. \cite{didinsky:basar:1992} uses
the result to consider the two-player problem with one-step delayed measurement
sharing with the neighbors, which is a special case of the partially nested information 
structure, where there is no signaling incentive. Also, a nonlinear team
problem with two team members was considered in
\cite{bernhard:99}, where one of the team members is assumed to
have full information whereas the other member has only access to
partial information about the state of the world. Related team
problems with exponential cost criterion were considered in
\cite{krainak:82}. Optimizing team problems with respect to
\textit{affine} decisions in a minimax quadratic cost was shown to
be equivalent to stochastic team problems with exponential cost,
see \cite{fan:1994}. The connection is not clear when the
optimization is carried out over nonlinear decision functions.
In \cite{gattami:bob:rantzer}, a general solution was given for an 
arbitrary number of team members, where linear decision were shown
to be optimal and can be found by solving a linear matrix inequality.
In the deterministic version of Witsenhausen's counterexample, that is
minimizing the quadratic cost with respect to the worst case scenario of 
the state $x$ (instead of the assumption that $x$ is Gaussian), the linear 
decisions where shown to be optimal in \cite{rotkowitz:2006}. 

We will show that for static linear quadratic minimax team problems, 
where the players in the team affect each others information structure through their decisions, linear decisions are optimal  in general, and can be found by solving a linear matrix inequality.


\newpage
\section{Main Results}
\label{minimaxteam}

The deterministic problem considered is a quadratic game
between a team of players and nature. Each player has limited
information
that could be different from the other players in the team.
This game is formulated as a minimax problem, where the team is
the minimizer and nature is the maximizer.
We show that if there is a solution to the static minimax
team problem, then linear decisions are optimal, and
we show how to find a linear optimal solution by
solving a linear matrix inequality.

\section{Deterministic Team Problems with Signaling Incentive}
Consider the following team decision problem

\begin{equation}
\label{minimax_signaling}
\begin{aligned}
\inf_{\mu} \sup_{v\in \mathbb{R}^p, 0\neq w\in \mathbb{R}^q}\hspace{1mm} & 
\frac{L(w,u)}{\|w\|^2+\|v\|^2}\\
\text{subject to }\hspace{1mm} & y_i = \sum_{j=1}^ND_{ij}u_j+E_iw + v_i\\
                   & u_i = \mu_i(y_i)\\
                   & \text{for } i=1,..., N,
\end{aligned}
\end{equation}
where $u_i\in \mathbb{R}^{m_i}$ and $E_i\in \mathbb{R}^{p_i\times q}$, for $i=1, ..., N$, 

$L(w,u)$ is a quadratic cost given by
$$L(w,u)=
 \left[
\begin{matrix}
w\\
u
\end{matrix}
\right] ^T
\left[
\begin{matrix}
Q_{ww} & Q_{wu}\\
Q_{uw} & Q_{uu}
\end{matrix}
\right]
\left[
\begin{matrix}
w\\
u
\end{matrix}
\right] ,
$$
$Q_{uu}\in \mathbb{S}^m_{++}$, $m=m_1+\cdots + m_N$, and 
$$
\left[
\begin{matrix}
Q_{ww} &Q_{wu}\\
Q_{uw} & Q_{uu}
\end{matrix}
\right] \in \mathbb{S}^{m+n}_+.
$$

The players $u_1$,..., $u_N$ make up a
\textit{team}, which plays against \textit{nature} represented by
the vector $w$, using $\mu\in \mathcal{C}$. This problem is more complicated than the static team decision problem studied in \cite{gattami:bob:rantzer}, since it has the same flavour as that of the Witsenhausen counterexample that was presented in the introduction. We see that the measurement $y_i$ of decision maker $i$ could be affected by the other decision makers through the terms $D_{ij}u_j$, $j=1, ..., N$.

Note that we have the equality $y = Du+Ew+v$ which is equivalent to
$v = Du+Ew-y$. Using this substitution of variable, the team problem (\ref{minimax_signaling}) is equivalent to
\begin{equation}
\label{minimax_signaling2}
\begin{aligned}
\inf_{\mu\in\mathcal{C}} \sup_{y\in \mathbb{R}^p, 0\neq w\in \mathbb{R}^q}\hspace{1mm} & 
\frac{L(w,\mu(y))}{||D\mu(y)+Ew-y||^2+\|w\|^2}\\
\end{aligned}
\end{equation}

\begin{ass}
\label{ass2}
$$
\gamma^\star \leq \bar{\gamma} := 
\inf_{Du \neq 0}\hspace{1mm} 
\frac{u^TQ_{uu}u}{u^TD^TDu} .
$$

\end{ass}
\vspace{5mm}
\begin{thm}
\label{signaling_team}
Let  $\gamma^\star$ be the value of the game  (\ref{minimax_signaling}) and 
suppose that Assumption \ref{ass2} holds. 
Then the following statements hold:

\begin{itemize}
\item[($i$)]
There exist linear decisions $\mu_i(y_i)=K_iy_i$, $i=1,
..., N$, where the value $\gamma^\star $ is achieved.
\item[($ii$)]
If $\gamma^\star<\bar{\gamma}$, then for any $\gamma\in [\gamma^\star~,~\bar{\gamma})$,
a linear decision $Ky$ with $K\in \mathbb{K}$ that achieves $\gamma$ is obtained  by solving the 
linear matrix inequality
{\small
\begin{equation*}
\label{team_computation}
  \begin{aligned}
    \text{find }\hspace {2mm} & K\\
    \text{subject to }\hspace {2mm}   & K=\text{diag}(K_1, ..., K_N) \\
	&
    C = \left[
		\begin{matrix}
			I & 0
		\end{matrix}
	\right] \in \mathbb{R}^{p\times(p+q)}, \hspace{3mm}  Q_{uu}(\gamma )\in \mathbb{S}^{m\times m}
	\end{aligned}
\end{equation*}    

\begin{equation*}
\begin{aligned}
\left[
\begin{matrix}
Q_{xx}(\gamma)	& Q_{xu}(\gamma)\\   
Q_{ux}(\gamma)   & Q_{uu}(\gamma)
\end{matrix}
\right]  &=
	\left[
		\begin{matrix}
			 Q_{ww}    & 0    &Q_{wu}\\
			 0   		& 0 	& 0\\
			 Q_{uw}   	& 0  	& Q_{uu} 
		\end{matrix}
	\right]
	 - \gamma
	\left[
		\begin{matrix}
			  E^TE       & - E^T      & - E^TD\\
			 - E   		& I    		& - D\\
			 - D^TE  	& - D^T 	& D^TD

		\end{matrix}
	\right] 
\end{aligned}
\end{equation*}
$$
\left[\begin{matrix}
    Q_{xx}(\gamma) + Q_{xu}(\gamma)KC+C^TK^TQ_{ux}(\gamma) & C^TK^T\\
    KC & -Q_{uu}^{-1}(\gamma)
    \end{matrix}
\right] \preceq 0 ,
$$
}

\end{itemize}

\end{thm}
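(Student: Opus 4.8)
The plan is to undo the signalling coupling with the change of variable already noted, $v=Du+Ew-y$, and then to recognise the resulting object as a signalling-free static minimax team problem to which the linear-quadratic machinery of \cite{gattami:bob:rantzer} applies, once the role of Assumption \ref{ass2} is made precise. Write $x$ for the augmented adversary vector built from $w$ and $y$ and let $C$ be the coordinate map $x\mapsto y$, so that a linear rule $\mu_i(y_i)=K_iy_i$ reads $u=KCx$ with $K\in\mathbb{K}$. Expanding $L(w,u)-\gamma\bigl(\|w\|^{2}+\|Du+Ew-y\|^{2}\bigr)$ and using $v=Du+Ew-y$ shows that $\mu\in\mathcal{C}$ achieves a level $\gamma$ in (\ref{minimax_signaling}) if and only if
$$\begin{bmatrix}x\\ \mu(Cx)\end{bmatrix}^{T}\begin{bmatrix}Q_{xx}(\gamma)&Q_{xu}(\gamma)\\ Q_{ux}(\gamma)&Q_{uu}(\gamma)\end{bmatrix}\begin{bmatrix}x\\ \mu(Cx)\end{bmatrix}\le 0\qquad\text{for every }x,$$
with the $\gamma$-dependent blocks exactly those in the statement, in particular $Q_{uu}(\gamma)=Q_{uu}-\gamma D^{T}D$. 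This last block is positive semidefinite for $\gamma\le\bar\gamma$ and positive definite for $\gamma<\bar\gamma$ — on $\ker D$ it equals $Q_{uu}\succ0$, and elsewhere the Rayleigh quotient of $Q_{uu}$ relative to $D^{T}D$ is bounded below by $\bar\gamma$ — so Assumption \ref{ass2} is precisely what places the value $\gamma^{\star}$ in the regime where the weight on the team action stays (semi)definite.

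I would settle (ii) first, since it is bookkeeping once (i) is in hand. For $\gamma<\bar\gamma$ the block $-Q_{uu}^{-1}(\gamma)$ is negative definite, so a Schur complement in that block shows the LMI of the statement to be equivalent to
$$Q_{xx}(\gamma)+Q_{xu}(\gamma)KC+C^{T}K^{T}Q_{ux}(\gamma)+C^{T}K^{T}Q_{uu}(\gamma)KC\preceq 0,$$
whose left-hand side is precisely the quadratic form above evaluated at $u=KCx=Ky$. Hence the LMI is feasible at level $\gamma$ iff some $K\in\mathbb{K}$ achieves $\gamma$, and any feasible $K$ is such a decision; a feasible $K$ exists because the linear rule supplied by (i) achieves $\gamma^{\star}\le\gamma$.

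The content is (i). When $Q_{uu}(\gamma)\succ 0$ one may complete the square in $u$, which turns ``$\mu$ achieves $\gamma$'' into
$$\bigl(\mu(Cx)-Lx\bigr)^{T}Q_{uu}(\gamma)\bigl(\mu(Cx)-Lx\bigr)+x^{T}S(\gamma)x\le 0\qquad\text{for every }x,$$
with $L=-Q_{uu}^{-1}(\gamma)Q_{ux}(\gamma)$ the unconstrained optimal gain and $S(\gamma)=Q_{xx}(\gamma)-Q_{xu}(\gamma)Q_{uu}^{-1}(\gamma)Q_{ux}(\gamma)$. Assume first $\gamma^{\star}<\bar\gamma$. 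For each $\gamma\in(\gamma^{\star},\bar\gamma)$ there is, by definition of $\gamma^{\star}$ as the value of the game, some $\mu\in\mathcal{C}$ meeting this inequality; the inequality then forces $S(\gamma)\preceq0$ and exhibits the feasibility question as a \emph{static} team problem — nature ranging over all of $x$, the players still seeing only their own block $y_{i}$ of $x$, the weight $Q_{uu}(\gamma)$ on the team action positive definite, and no signalling term left because the measurement $y$ has become a free adversary variable. The static-team result of \cite{gattami:bob:rantzer} then produces a $K(\gamma)\in\mathbb{K}$ achieving $\gamma$. Letting $\gamma\downarrow\gamma^{\star}$, the $K(\gamma)$ stay in a compact set — the achieved-level inequality together with $Q_{uu}(\gamma)\succeq cI$ uniformly near $\gamma^{\star}$ (valid since $\gamma^{\star}<\bar\gamma$) bounds $\|K(\gamma)C\|$, hence $\|K(\gamma)\|$ as $C$ is onto — and any limit point $K^{\star}\in\mathbb{K}$ achieves $\gamma^{\star}$ by continuity of the inequality in $(\gamma,K)$; this is (i) in the strict case.

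The remaining case $\gamma^{\star}=\bar\gamma$, where $Q_{uu}(\gamma^{\star})$ may be singular so that neither the square completion nor the approach from $\gamma>\gamma^{\star}$ is directly available, I would handle by perturbation: replacing $Q_{uu}$ by $Q_{uu}+\varepsilon I$ strictly increases $\bar\gamma$ while moving $\gamma^{\star}$ continuously, so the strict case applies to the perturbed data, and one lets $\varepsilon\to0$. I expect the decoupling step — verifying that the substituted problem genuinely falls under the template of \cite{gattami:bob:rantzer}, so that the decentralisation constraint $\mu\in\mathcal{C}$ survives the square completion and the per-player pieces are simultaneously realisable by a single $K\in\mathbb{K}$ — together with this boundary case, to be the main obstacle; everything else is completion of squares and Schur complements.
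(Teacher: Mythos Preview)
Your proposal is correct and follows essentially the same route as the paper: the substitution $v=Du+Ew-y$, the augmentation $x=(w,y)$ with the quadratic form $Q-\gamma R$, the observation $Q_{uu}(\gamma)=Q_{uu}-\gamma D^{T}D\succ0$ for $\gamma<\bar\gamma$, the appeal to the static-team result of \cite{gattami:bob:rantzer}, compactness to reach $\gamma^{\star}$, and a Schur complement for (ii). You are in fact slightly more careful than the paper --- you make the compactness bound on $K(\gamma)$ explicit and you separately treat the boundary case $\gamma^{\star}=\bar\gamma$ by perturbation, whereas the paper's argument tacitly relies on the interval $(\gamma^{\star},\bar\gamma)$ being nonempty.
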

\begin{proof}

($i$) Note that 
$$y  = Du + Ew + v \Longleftrightarrow v = y - Du - Ew\Rightarrow$$
$$\Rightarrow \frac{L(w,u)}{||v||^2+\|w\|^2} = \frac{L(w,u)}{||y - Du - Ew||^2+\|w\|^2}.$$
Now introduce $x\in \mathbb{R}^{n}$, $n=p+q$, such that 
$$
x =	\left[
		\begin{matrix}
			w\\
			y
		\end{matrix}
	\right] ,
$$
and
\begin{equation}
\label{QR}
\begin{aligned}
Q  &=	
	\left[
		\begin{matrix}
			 Q_{ww}   & 0    & Q_{wu}\\
			 0   		& 0 	& 0\\
			Q_{uw}   	& 0  	& Q_{uu} 
		\end{matrix}
	\right] , \\
R  &=	
	\left[
		\begin{matrix}
			  E^TE       & - E^T      & - E^TD\\
			 - E   		& I    		& - D\\
			 - D^TE  	& - D^T 	& D^TD

		\end{matrix}
	\right].		
\end{aligned}
\end{equation}
Then,
\begin{equation*}
\begin{aligned}
J(x,u) &:= \left[
\begin{matrix}
x\\
u
\end{matrix}
\right]^T
Q
\left[
\begin{matrix}
x\\
u
\end{matrix}
\right] = L(w,u),
\\
F(x,u) &:= \left[
\begin{matrix}
x\\
u
\end{matrix}
\right]^T
R
\left[
\begin{matrix}
x\\
u
\end{matrix}
\right] = ||y - Du - Ew||^2+\|w\|^2,
\end{aligned}
\end{equation*}
and
$
y = Cx.
$
Hence, we have that
$$
\frac{L(w,u)}{||v||^2+\|w\|^2} = \frac{L(w,u)}{||y - Du - Ew||^2+\|w\|^2} = \frac{J(x,u)}{F(x,u)}.
$$
Then, for any $\gamma\in (\gamma^\star~,~\bar{\gamma})$, there exists a decision function $\mu\in\mathcal{C}$
such that
$$
J(x,\mu(Cx)) - \gamma F(x,\mu(Cx)) = 
\left[
\begin{matrix}
x\\
\mu(Cx)
\end{matrix}
\right] ^T
\left[
\begin{matrix}
Q_{xx}(\gamma)	& Q_{xu}(\gamma)\\   
Q_{ux}(\gamma)   & Q_{uu}(\gamma)
\end{matrix}
\right]
\left[
\begin{matrix}
x\\
\mu(Cx)
\end{matrix}
\right] 
\leq 0
$$
for all $x$. Under Assumption \ref{ass2}, we have that 
$$
Q_{uu}(\gamma) = Q_{uu} -\gamma D^TD\succ 0
$$
 for any $\gamma\in (\gamma^\star~,~\bar{\gamma}]$. Thus,
we can  apply Theorem 1 in \cite{gattami:bob:rantzer}, which implies that there must exist linear decisions that can achieve
any  $\gamma\in (\gamma^\star~,~\bar{\gamma}]$. By compactness, there must exist linear decisions that achieve $\gamma^\star$.\\

($ii$) Let $\mu(Cx) = KCx$ for  $K\in \mathbb{K}$. Then 

$$
\left[
\begin{matrix}
x\\
KCx
\end{matrix}
\right] ^T
\left[
\begin{matrix}
Q_{xx}(\gamma)	& Q_{xu}(\gamma)\\   
Q_{ux}(\gamma)   & Q_{uu}(\gamma)
\end{matrix}
\right]
\left[
\begin{matrix}
x\\
KCx
\end{matrix}
\right] 
\leq 0, ~ ~ \forall x
$$
$$
\Updownarrow
$$
$$
\left[
\begin{matrix}
I\\
KC
\end{matrix}
\right] ^T
\left[
\begin{matrix}
Q_{xx}(\gamma)	& Q_{xu}(\gamma)\\   
Q_{ux}(\gamma)   & Q_{uu}(\gamma)
\end{matrix}
\right]
\left[
\begin{matrix}
I\\
KC
\end{matrix}
\right] 
\preceq 0
$$
$$
\Updownarrow
$$

$$
Q_{xx}(\gamma)+  Q_{xu}(\gamma)KC+C^TK^TQ_{ux}(\gamma) + C^TK^T Q_{uu}(\gamma) KC\preceq 0
$$

$$
\Updownarrow
$$
$$
\left[\begin{matrix}
   Q_{xx}(\gamma) + Q_{xu}(\gamma)KC+C^TK^TQ_{ux}(\gamma) & C^TK^T\\
    KC & -Q_{uu}^{-1}(\gamma)
    \end{matrix}
\right] \preceq 0,
$$
and the proof is complete.
\end{proof}
\section{Linear Quadratic Control with Arbitrary Information Constraints}
Consider the dynamic team decision problem
\begin{equation}
\label{lq}
\begin{aligned}
\inf_{\mu} \sup_{w, v\neq 0}\hspace{2mm} & \frac{\sum_{k=1}^{M}\left[
\begin{matrix}
x(k)\\
u(k)
\end{matrix}
\right]^T
\left[
\begin{matrix}
Q_{xx} & Q_{xu}\\
Q_{ux} & Q_{uu}
\end{matrix}
\right]
\left[
\begin{matrix}
x(k)\\
u(k)
\end{matrix}
\right]}{\sum_{k=1}^M \|w(k)\|^2+\|v(k)\|^2}\\
\text{subject to } & x(k+1) = Ax(k)+Bu(k)+w(k)\\
                   & y_i(k) = C_ix(k)+v_i(k)\\
                   & u_i(k) = [\mu_k]_{i}(y_i(k)),  i=1,..., N.
\end{aligned}
\end{equation}
Now write $x(t)$ and $y(t)$ as
{\small
\begin{equation*}
\label{expansion}
\begin{aligned}
x(t) &= \sum_{k=1}^{t}A^{k}Bu(M-k)+\sum_{k=1}^{t}A^{k}w(M-k),\\
y_i(t) &= \sum_{k=1}^{t}C_iA^{k}Bu(M-k)+\sum_{k=1}^{t}C_iA^{k}w(M-k)+v_i(k).
\end{aligned}
\end{equation*}}
It is easy to see that the optimal control problem above is equivalent to a static team problem
of the form (\ref{minimax_signaling}). Thus, linear controllers are optimal under Assumption \ref{ass2}.

\begin{ex}
Consider the deterministic version of the Witsenhausen counterexample presented 
in the introduction:
\begin{equation*}
\begin{aligned}
\inf_{\mu_1,\mu_2} \hspace{1mm} & \gamma \\
\text{s. t. } &\frac{ {k^2}\mu_1^2(y_1) +({x_1}-\mu_2(y_2))^2}{x_0^2+w^2} \leq \gamma\\
&{y_1 = x_0}\\
&{x_1} = {x_0}+\mu_1(y_1)\\
&y_2 = x_1+w = {x_0}+\mu_1(y_1)+w
\end{aligned}
\end{equation*}
Substitue $x_0=y_1$, $x_1 = y_1 + \mu_1(y_1)$ and
$
w^2 = ({x_0}+\mu_1(y_1)-y_2)^2
$
 in the inequality
 $$
 \ {k^2}\mu_1^2(y_1) +({x_1}-\mu_2(y_2))^2 \leq \gamma ({x_0^2}+w^2).
 $$
Then, we get the equivalent problem
\begin{equation*}
\begin{aligned}
\inf_{\mu_1, \mu_2} \hspace{1mm} & \gamma \\
\text{s. t. } & {k^2}\mu_1^2(y_1)+(y_1+\mu_1(y_1)-\mu_2(y_2))^2 
\leq \gamma (y_1^2+(y_1+\mu_1(y_1)-y_2)^2)
\end{aligned}
\end{equation*}
Completing the squares gives the following equivalent inequality
$$
\left[
\begin{matrix}
y_1\\
y_2\\
\mu_1(y_1)\\
\mu_2(y_2)
\end{matrix}
\right]^T
\left[
\begin{matrix}
1-2\gamma & \gamma & 1-\gamma & -1\\
\gamma & -\gamma & \gamma & 0\\
1-\gamma & \gamma & {1+k^2 - \gamma } & {-1}\\
-1 & 0 & {-1} & {1}
\end{matrix}
\right]
\left[
\begin{matrix}
y_1\\
y_2\\
\mu_1(y_1)\\
\mu_2(y_2)
\end{matrix}
\right] \leq 0
$$
For $k^2=0.1$, we can search over $\gamma <\bar{\gamma} = k^2 = 0.1$, and we can use 
Theorem \ref{signaling_team} to deduce that linear decisions are optimal, and can be computed
by iteratively solving a linear matrix inequality, where the iterations are done with respect to $\gamma$. We find that
$${\gamma^\star \approx 0.0901},$$ 
$${\mu_1(y_1)=-0.9001 y_1},$$ 
$${\mu_2(y_2)=-0.0896 y_2}.$$ 
 
 For $k^2 = 1$, we iterate with respect to $\gamma < 1$, and we find 
 optimal linear decisions given by 
 \begin{equation*}
\begin{aligned}
{\mu_1(y_1)} & {=}  {-0.3856 y_1}\\
{\mu_2(y_2)} & {=}  {0.3840 y_2} 
\end{aligned}
\end{equation*}
$$\Downarrow$$
$$
\gamma^\star =  0.3820
$$

\end{ex}

\begin{ex}
Consider the deterministic counterpart of the multi-stage finite-horizon stochastic control problem that was considered in \cite{lipsa:2008}:
$$
\inf_{\mu_k:\mathbb{R}\rightarrow \mathbb{R}} \sup_{x_0, v_0, ..., v_{m-1}\in \mathbb{R}} \frac{(x_m-x_0)^2+ \sum_{k=0}^{m-2} \mu^2_k(y_k)}{x_0^2+v_0^2+\cdots+v_{m-1}^2}
$$
subject to the dynamics 
\begin{align}
x_{k+1} &= \mu_k(y_k) \nonumber\\
y_k &= x_k+v_k \nonumber .
\end{align}
It is easy to check that $\bar{\gamma}=1$ and 
$Q_{uu} -\gamma D^T D\succ 0$ for $\gamma<\bar{\gamma}$ (compare with Assumption \ref{ass2}) . Thus, 
linear decisions are optimal. This is compared to the stochastic version, where linear decisions where not optimal
for $m>2$.
\end{ex}
\section{Conclusions}
We have considered the static team problem in deterministic linear
quadratic settings where the team members may affect each others information. 
We have shown that decisions that are linear
in the observations are optimal and can be found by solving a
linear matrix inequality. 

For future work, it would be interesting to consider the case where the measurements are given by $y = Du +Ew + Fv$,
for an arbitrary matrix $F$.
\section{Acknowledgements}
The author is grateful to Professor Anders Rantzer, Professor Bo Bernhardsson, and the reviewers for 
valuable comments and suggestions.

This work is supported by the Swedish Research Council.

\bibliography{../../ref/mybib}

\end{document}